\theoremstyle{plain}
\newtheorem{theorem}{Theorem}[section]
\theoremstyle{definition}
\newtheorem{remark}[theorem]{Remark}
\newcommand{\Z}{{\mathbb Z}}
\newcommand{\R}{{\mathbb R}}
\begin{document}

\title[Orlicz-Lorentz Gauge Norm Inequalities for Nonegative Integral Operators]{Orlicz-Lorentz Gauge Norm Inequalities for Nonegative Integral Operators}

\author[R.~Kerman]{Ron Kerman}
\address{R.~Kerman\\Department of Mathematics and Statistics,
Brock University, 1812 Sir Isaac Brock Way, St. Catharines, ON L2S 3A1
}
\email{rkerman@brocku.ca}

\author[S.~Spektor]{Susanna Spektor}
\address{S.~Spektor\\Department of Mathematics and Statistics
Sciences, PSB,
Sheridan College Institute of Technology and Advanced Learning,
4180 Duke of York Blvd., Mississauga, ON L5B 0G5}
\email{susanna.spektor@sheridancollege.ca}

\maketitle

\begin{abstract}

\medskip

\noindent 2020 Classification: 46B06, 60C05
%

\noindent Keywords:kernel
\end{abstract}




\setcounter{page}{1}
\section{Introduction}

Let $k \in M_+(\R^n)$ and $K \in M_+(\R_+^{2n})$; here $\R_+=(0, \infty)$ and, for example, $M_+(\R_+^{2n})$ denotes the class of nonnegative, Lebesgue-measurable functions on the product space $\R_+^{2n}, n \in \Z_+$.

We consider two kinds of operators, namely, convolution operators $T_k$ defined at $f \in M_+(\R^n_+)$ by
\[
(T_kf)(x)=(k \ast f)(x)=\int_{\R^n}k(x-y)f(y)\, ds, \quad x \in \R^n,
\]
and general nonnegative integral operators $T_K$ defined at $f \in M_+(\R_+)$ by
\[
(T_Kf)(x)=\int_{\R_+}K(x,y)f(y)\, dy, \quad x \in \R_+.
\]

We are interested in Orlicz gauge norms $\rho_1$ and $\rho_2$ on $M_+(\R_+)$ for which
\begin{align}\label{1}
\rho_1((Tf)^*)\leq C \rho_2(f^*),
\end{align}
where $C>0$ is independent of $f$. In (\ref{1}), $T=T_k$ or $T=T_K$ and, accordingly, $f\in M_+(\R^n)$ or $f \in M_+(\R_+)$.

The function $f^*$ in (\ref{1}) is the nonincreasing rearrangement  of $f$ on $\R_+$, with
\[
f^*(t)=\mu_f^{-1}(t),
\]
in which
\[
\mu_f(\lambda)=|\{x:\, f(x)>\lambda\}|.
\]

The gauge norm $\rho$ is given in terms of an $N$-function
\[
\Phi(x)=\int_0^x\phi(y)\, dy, \quad x \in \R_+,
\]
$\phi$ a nondecreasing function mapping $\R_+$ onto itself, and a locally-integrable (weight) function $u$ on $\R_+$. Specifically, the gauge norm $\rho=\rho_{\Phi,u}$ is defined at $f\in M_+(\R_+)$ by
\[
\rho_{\Phi,u}(f)=\inf\{\lambda>0: \int_{\R_+}\Phi\left(\frac{f(x)}{\lambda}\right)u(x)\, dx\leq 1\}.
\]
Thus, in (\ref{1}), $\rho_1=\rho_{\Phi_1,u_1}$ and $\rho_2=\rho_{\Phi_2, u_2}$. The gauge norms in (\ref{1}) involving rearrangements are referred to as Orlicz-Lorentz norms.

\section{The convolution operators $T_k$ on $M_+(\R^n)$}

The fundamental inequality used in the analysis of $T_k$ is a rewritten form of the O'Neil rearranged convolution inequality.
\begin{align}\label{2.1}
\int_0^t(f \ast g)^*(s)\, ds\leq \int_0^tf^*(s)\, ds \int_0^tg^*(s)\, ds+t\int_t^{\infty}f^*(s)g^*(s)\, ds,
\end{align}
$f,g, \in M_+(\R^n_+), \, t \in \R_+$.

The inequality
\[
\rho_1((T_Kf)^*)\leq C \rho_2(f^*)
\]
is shown to following form
\begin{align}\label{3.1}
\rho_1(T_Lf^*)\leq C \rho_2(f^*), \quad f \in M_+(\R_+),
\end{align}
in which $L=L(x,y), \quad x,y, \in \R_+$ is the so-called iterated rearrangement of $K(x,y)$, which rearrangement is nonincreasing in each of $x$ and $y$.

Using the concept of the down dual of a gauge norm the inequalities (\ref{2.1}) and (\ref{3.1}) for nonincreasing fucntiosn are, in each case, reduced to gauge norm inequalities for general functions in Theorem 2.1 and Theorem 3.4, respectively. Sufficient conditions for stronger \textit{integral } inequalities like
\begin{align}\label{4.1}
\Phi_1^{-1}\left(\int_{\R_+}\Phi_1(\omega(x))((Tf)(x))t(x)\, dx\right)\leq \Phi_2^{-1}\left(\int_{\R_+}\Phi_2(u(y)f(y)v(y)\, dy)\right),
\end{align}
with $c>0$ independent of $f\in M_+(\R_+)$.

The integral inequality (\ref{4.1}) is the same as the corresponding gauge norm inequality
\[
\rho_{\Phi_1,t}(\omega Tf)\leq C \rho_{\Phi_2,v}(\omega f),
\]
when $\Phi_1(s)=s^q$ and $\Phi_2(s)=s^p, \quad 1<p\leq < \infty$. The necessary and sufficient conditions are spelled out in this case.

 Fundamental to our approach is the O'Neil's rearrangement convolution inequality which we write in  the form
 \begin{align}\label{2}
 \int_0^t(f \ast g)^*(s)\, ds \leq \int_0^t f^*(s)\, ds\int_0^*g^*(s)\, ds +t\int_t^{\infty}f^*(s)g^*(s)\, ds,
\end{align}
$f, g \in M_+(\R^n), \, t \in \R_+$.

We claim (\ref{2}) amounts to the domination of $(f\ast g)(t)$ by the expression
\begin{align}\label{3}
f^*(t)\int_0^tg^*+g^*(t)\int_0^tf^*+\int_t^{\infty}f^*g^*
\end{align}
in the HLP sense. Indeed,  the first two terms in (\ref{3}) add up to $\dfrac{d}{dt}\left(\int_0^tf^*\int_0^tg^*\right)$, so the integral between $0$ and $t$ of these terms is the first term on the right side of (\ref{2}). Again,
\begin{align*}
\int_0^t\int_s^{\infty}f^*g^*&=\int_0^t\int_s^t f^*g^*+t\int_t^{\infty}f^*g^*\\
&\leq \int_0^tf^*(s)\int_s^tg^*+t\int_t^{\infty}f^*g^*\\
&\leq \int_0^tf^*\int_0^tg^*+t\int_t^{\infty}f^*g^*.
\end{align*}

If $\rho=\rho_{\Phi}$ is the gauge norm, the HLP inequality
\[
\int_0^t(f \ast g)^*\leq 2\int_0^t\left[f^*(s)\int_0^s g^*+g^*(s)\int_0^sf^*+\int_s^{\infty}f^*g^*\right]\, ds
\]
ensures that
\[
\rho((f \ast g)^*)\leq 2\left[\rho_d\left(f^*(t)\int_0^tg^*\right)+\rho_d\left(g^*(t)\int_0^tf^*\right)+\rho_d\left(\int_t^{\infty}f^*g^*\right)\right],
\]
where
\[
\rho_d(h)=\sup_{\rho_{\Psi}(k)\leq 1}\int_{\R_+}hk^*, \quad h,k \in M_+(\R_+),
\]
with $\Psi(t)=\int_0^t \phi^{-1}(s)\, ds$ being the $N$-function complementary to $\Phi$.

According to \cite{GK} Theorem,
\[
\rho_d(h)=\rho\left(\int_0^th/t\right), \quad h \in M_+(\R_+).
\]

In Theorem 1 to follow, which summarizes the foregoing discussions, $(Ik^*)(t)=\int_0^tk^*$ and $(T_2k^*)(t)=I(Ik^*)(t)=\int_0^t(t-s)k^*(s)\, ds$.

\begin{theorem}\label{Th1}
Fix $k \in M_+(\R^n)$ and let $\Phi_1$ and $\Phi_2$ be $N$-functions, with $\Phi_2(2t)\approx \Phi_2(t), \quad t\gg 1$. Settings $\rho_i=\rho_{\Phi_i}, \quad i=1,2$, we have
\begin{align}\label{4}
\rho_1(T_kf)\leq C \rho_2(f), \quad f \in M_+(\R^n),
\end{align}
provided
\begin{align*}
(i)&\quad \rho_1\left(\frac 1t \int_0^tf(s)\int_s^tk^*\, ds\right)\leq C \rho_2(f), \quad f \in M_+(\R_+);\\
(ii)&\quad \tilde{\rho}_2\left(\frac 1t \int_0^tg(s)\int_s^tk^*\, ds\right)\leq C \tilde{\rho}_1(g), \quad g\in M_+(\R_+);\\
(iii)&\quad \tilde{\rho}_2\left(\frac{(I_2k^*)(t)}{t}\int_t^{\infty}g(y)\frac{dy}{y}\right)\leq C \tilde{\rho}_1(g), \quad g \in M_+(\R_+);\\
(iv)&\quad \hat{\rho}_2\left(\frac 1t \int_0^t\frac{(Ik^*)(s)}{s}g(s)\, ds\right)\leq C \hat{\rho_1}(g), \quad g \in M_+(\R),
\end{align*}
here $\hat{\rho}_i=\rho_{\Psi_i}, \quad i=1,2$.
\end{theorem}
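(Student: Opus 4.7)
My plan is to combine the HLP-type three-term domination of $(k*f)^*$ established in the discussion preceding the statement with the Goldman--Kerman identity $\rho_d(h)=\rho\bigl(\tfrac{1}{t}\int_0^t h\bigr)$ and a duality argument, matching each of the three summands to one of the hypotheses (i)--(iv).

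From O'Neil's inequality (\ref{2}) rewritten in HLP form via (\ref{3}), the monotonicity of $\rho_1$ on the HLP cone, and the defining formula for the down dual, one obtains
\[
\rho_1((T_kf)^*)\le 2\Bigl[(\rho_1)_d\bigl(f^*(Ik^*)\bigr)+(\rho_1)_d\bigl(k^*(If^*)\bigr)+(\rho_1)_d\Bigl(\int_t^{\infty}k^*f^*\Bigr)\Bigr];
\]
since $\rho_2$ is rearrangement invariant it suffices to bound each of the three summands by $C\rho_2(f^*)$. For the middle summand, the Goldman--Kerman identity together with a Fubini interchange recasts $(\rho_1)_d(k^*(If^*))$ as $\rho_1\bigl(\tfrac{1}{t}\int_0^t f^*(s)\int_s^t k^*(u)\,du\,ds\bigr)$, and hypothesis (i), applied to the nonincreasing function $f=f^*$, bounds this directly by $C\rho_2(f^*)$.

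For the first and third summands I pass to duality via $\rho_1(X)\approx\sup\{\int Xg:\rho_{\Psi_1}(g)\le 1\}$, apply Hardy--Littlewood to exchange test functions with their nonincreasing rearrangements, and invoke Orlicz H\"older with the complementary pair $(\Phi_2,\Psi_2)$. This reduces each of the two remaining estimates to an inequality of the form $\hat\rho_2(\text{adjoint of a Hardy average})\le C\hat\rho_1(g)$ on $g\in M_+(\R_+)$; after a Fubini reversal the adjoints take the shape on the left-hand sides of (iii) (for the tail term $\int_t^\infty k^*f^*$, via the tail average $\int_t^\infty g(y)/y\,dy$) and of (iv) (for $f^*(Ik^*)$, via the Hardy average $\tfrac{1}{t}\int_0^t (Ik^*)(s)g(s)/s\,ds$), while hypothesis (ii) supplies the equivalent formulation on the down-dual side obtained by reapplying $\rho_d=\rho\bigl(\int_0^t\cdot/t\bigr)$ and may be used interchangeably for one of these two terms. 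Summing the three bounds yields (\ref{4}).

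The principal obstacle is the bookkeeping: each transition between $\rho_i$, its down dual $\tilde\rho_i=(\rho_i)_d$, and its associate $\hat\rho_i=\rho_{\Psi_i}$, together with the various Fubini reversals and the Hardy-average identifications, transforms the kernel--weight expression, and one must verify that the final form coincides with one of (i)--(iv) precisely rather than with a neighbouring variant. Once the correspondences are pinned down the individual reductions are routine, and the $N$-function regularity $\Phi_2(2t)\approx\Phi_2(t)$ is used only to justify the constant absorption in the duality step.
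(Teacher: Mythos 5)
Your proposal follows essentially the same route as the paper, whose ``proof'' of Theorem \ref{Th1} is exactly the discussion that precedes it: O'Neil's inequality (\ref{2}) recast as the HLP domination by the three-term expression (\ref{3}), the resulting bound $\rho_1((k\ast f)^*)\le 2[\rho_d(f^*\int_0^t k^*)+\rho_d(k^*\int_0^t f^*)+\rho_d(\int_t^\infty f^*k^*)]$, the Goldman--Kerman identity $\rho_d(h)=\rho\left(\frac{1}{t}\int_0^t h\right)$, and the matching of each resulting Hardy-type operator inequality with one of (i)--(iv). Your identification of (i) with the summand $k^*(t)\int_0^t f^*$ via Fubini is precisely the paper's, and your remaining duality reductions to (ii)--(iv) are carried out at the same (admittedly loose) level of detail as the source, so the proposal is correct relative to what the paper itself establishes.
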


There are no known conditions that are necessary and sufficient for any of the norm inequalities $(i)--(iv)$, unless $\Phi_1(t)=t^q, \, \Phi_2(t)=t^p, \quad 1<p\leq q< \infty$.

There are, however, such conditions for an \textit{integral} inequality stronger that the corresponding norm inequality. We combine Theorems 1.7 and 4.1 from \cite{BK} concerning such integral inequalities.

\begin{theorem}\label{TH2}
Consider $K(x,y)\in M_+(\R_+^2)$, which, for fixed $y \in \R_+$, increases in $x$ and, for fixed $x\in \R_+$, decreases in fixed $x$, and which satisfies the growth condition
\begin{align}\label{5}
K(x,y)\leq K(x,z)+K(z,y), \quad 0<y<z<x.
\end{align}

Let $t,u,v$ be nonnegative, measurable (weight) functions on $\R_+$ and suppose $\Phi_1$ and $\Phi_2$ are $N$-functions having complimentary functions $\Psi_1$ and $\Psi_2$, respectively, with $\Phi_1 \circ \Phi_2^{-1}$ convex. Then there exists $C>0$ such that
\begin{align}\label{6}
\Phi_1^{-1}\left(\int_{\R_+}\Phi_1(w(x)(T_Kf)(x))t(x)\, dx\right)\leq \Phi_2^{-1}\left(\int_{\R_+}\Phi_2(Cu(x)f(x))v(x)\, dx\right)
\end{align}
for all $f\in M_+(\R_+)$, if and only if there is a $c>0$, independent of $\lambda, x>0$, with
\begin{align}\label{7}
&\int_0^x\Psi_2\left(c\frac{\alpha(\lambda, x)K(x,y)}{\lambda u(y)v(y)}\right)v(y)\, dy\leq \alpha(\lambda, x)<\infty\nonumber\\
&\textit{and}\\
&\int_0^x\Psi_2\left(c\frac{\beta(\lambda, x)}{\lambda u(y)v(y)}\right)v(y)\, dy\leq \beta(\lambda, x)<\infty\nonumber,
\end{align}
where
\[
\alpha(\lambda, x)=\Phi_2 \circ \Phi_1^{-1}\left(\int_x^{\infty}\Phi_1(\lambda w(y))t(y)\, dy\right)
\]and
\[
\beta(\lambda,x)=\Phi_2 \circ \Phi_1^{-1}\left(\Phi_1(\lambda w(y)K(y,x))t(y)\, dy\right).
\]

In the case $K(x,y)=\chi_{(0,x)}(y)$ ony the first of the conditions in (\ref{6}) is required.
\end{theorem}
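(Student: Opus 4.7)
The plan is to recognize this theorem as a direct combination of the two results from \cite{BK} that it explicitly cites — Theorem 1.7 and Theorem 4.1 — each of which characterizes a weighted Orlicz integral inequality for a class of monotone kernels by a single modular Muckenhoupt-type condition. Our hypotheses on $K$ (monotone in each variable, with the subadditivity (\ref{5})) are designed precisely so that $K$ fits into both of those classes simultaneously, and the two characterizing conditions then become the two lines of (\ref{7}).

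For sufficiency, I would first use (\ref{5}) together with the monotonicity of $K$ to majorize $(T_Kf)(x)$ pointwise by a sum of two simpler operators: a Hardy-averaging-type piece obtained by freezing the outer argument (to be governed by $\alpha(\lambda,x)$), and a companion piece in which the transposed kernel $K(y,x)$ appears (to be governed by $\beta(\lambda,x)$). Theorem 4.1 of \cite{BK} supplies the first line of (\ref{7}) as the necessary and sufficient condition for the Orlicz integral inequality for the first piece, with $\alpha$ reading off the $t$-weighted $\Phi_1$-tail of $w$ above $x$; Theorem 1.7 of \cite{BK} supplies the second line for the companion piece, with $\beta$ reading off the $\Phi_1$-modular of $\lambda w K(\cdot,x)$ against $t$. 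Adding the two bounds and using that $\Phi_1 \circ \Phi_2^{-1}$ is convex — which permits a sum inside $\Phi_1^{-1}$ to be consolidated, up to a universal constant, into a single $\Phi_2^{-1}$ on the right — produces (\ref{6}) with a common constant $C$.

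For necessity, I would test (\ref{6}) against carefully tailored functions. To extract the first condition in (\ref{7}), fix $x$ and $\lambda$, take $f$ supported in $(0,x)$ with a shape chosen so that, after applying $\Phi_2^{-1}$, the right side of (\ref{6}) evaluates to exactly $\alpha(\lambda,x)$; monotonicity of $K$ in its first argument then forces the left side to absorb a factor of $K(x,y)$ on integration, and reorganizing through $\Phi_1$ and $\Psi_2$ via Young's inequality rearranges to the first line of (\ref{7}). A transposed test function, essentially interchanging the roles of the two arguments of $K$ and using $K(y,x)$ on a tail in $y$, produces the second line in the same manner.

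The main obstacle is the decomposition step: one must verify that under (\ref{5}) and the monotonicity hypotheses, the two pieces of $T_Kf$ correspond term-by-term to the two operators covered by \cite{BK}, with no residual cross term escaping either characterization. The convexity of $\Phi_1 \circ \Phi_2^{-1}$ is used precisely here, both to convert sums of modulars into modulars of sums and to interchange $\Phi_1^{-1}$ with $\Phi_2^{-1}$ as needed when assembling the two contributions. The final sentence of the theorem is then immediate: for $K(x,y) = \chi_{(0,x)}(y)$ one has $K(y,x) = 0$ whenever $y < x$, so $\beta(\lambda,x) \equiv 0$ and its condition is vacuous, leaving only the $\alpha$-condition, which in this case reduces to the classical Orlicz-Muckenhoupt condition for the Hardy averaging operator.
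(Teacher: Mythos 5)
The paper itself offers no proof of Theorem~\ref{TH2}: it is stated as an imported result, ``combining Theorems 1.7 and 4.1 from \cite{BK}'', with no proof environment following it, so your proposal has to be measured against the Bloom--Kerman argument rather than anything in this paper. Your overall architecture is in the right spirit, but you have inverted the roles of the two cited results: Theorem 1.7 of \cite{BK} is the full characterization for generalized Hardy operators and is responsible, on its own, for the entire equivalence of (\ref{6}) with the \emph{pair} of conditions (\ref{7}); Theorem 4.1 is the special case $K(x,y)=\chi_{(0,x)}(y)$ and contributes only the final sentence. They do not each govern one of the two pieces of your decomposition. Relatedly, the sufficiency half of your sketch is incomplete where it matters most: the growth condition (\ref{5}) does not yield a single global pointwise majorization of $T_Kf$ by two simpler operators with one fixed splitting point $z$; in \cite{BK} the splitting points form a sequence chosen adaptively (depending on $f$) in a discretization/blocking argument, and the two conditions in (\ref{7}) arise from the two families of blocks. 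You identify this decomposition as ``the main obstacle'' but do not resolve it, so as written the sufficiency direction is a gap, not a proof.

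Your justification of the final sentence is also incorrect. For $K(x,y)=\chi_{(0,x)}(y)$ one has $K(y,x)=\chi_{(0,y)}(x)$, which equals $1$, not $0$, for every $y>x$; since the integral defining $\beta(\lambda,x)$ runs over the tail $y>x$ (the range $y<x$ that you invoke is irrelevant there), one gets $\beta(\lambda,x)=\alpha(\lambda,x)$ rather than $\beta\equiv 0$. The second condition is therefore not vacuous: for the Hardy kernel the two conditions in (\ref{7}) literally coincide (because $K\equiv 1$ on its support), and the content of Theorem 4.1 of \cite{BK} is precisely that this single condition suffices. The necessity half of your sketch --- testing (\ref{6}) on functions supported in $(0,x)$ and passing to $\Psi_2$ via Young's inequality --- is the standard and correct route.
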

\begin{remark} The integral inequality (\ref{6}) with the kernel $K$ of Theorem \ref{TH2} replace by any $K \in M_+(\R_+^{2m})$ implies the norm inequality
\begin{align}\label{8}
\rho_{\Phi_1,t}(wT_Kf)\leq C \rho_{\Phi_2,v}(uf), \quad f \in M_+(\R_+^n).
\end{align}
Thus, in the generalization of (\ref{5}), replace $f$ by $\dfrac{f}{C \rho_{\Phi_2,v}(uf)}$ and suppose $\Phi_i(1)=1, \, i=1,2$. since $\int_{\R_+}\Phi_{2,v}\left(\dfrac{uf}{\rho_{\Phi_2,v}(uf)}\right)\leq 1,$ we get
\[
\int_{\R_+^n}\Phi_1\left(\frac{wT_Kf}{C \rho_{\Phi_{2,v}}(uf)}\right)t\leq 1,
\]
whence (\ref{8}) holds.
\end{remark}

In Theorem 4 to follow, $(Ik^*)(t)=\int_0^tk^*$ and $(I_2k^*)(t)=((I \circ I)k^*)(t)=\int_0^tk^*(s)(t-s)\, ds.$

\begin{theorem}\label{TH4}
Let $k, \rho_1$ and $\rho_2$ be as in Theorem \ref{Th1}. Then, (\ref{4}) holds if there exists $c>0$, independent of $\lambda, x \in \R_+$, such that
\begin{align*}
(v)\qquad\qquad\qquad \int_0^x \Psi_2\left(\frac{c \alpha(\lambda, x)}{\lambda}\int_y^xk^*\right)\, dy\leq \alpha_1(\lambda,x)<\infty
\end{align*}
and
\[
\Psi_2\left(\frac{c\beta_1(\lambda,x)}{\lambda}\right)\leq \beta_1(\lambda,x)<\infty,
\]
where
\[
\alpha_1(\lambda,x)=\Phi_2 \circ \Phi_1^{-1}\left(\int_x^{\infty}\Phi_1\left(\frac{\lambda}{y}\right)\, dy\right)
\]
and
\[
\beta_1(\lambda,x)\Phi_2 \circ \Phi_1^{-1}\left(\int_x^{\infty}\Phi_1\left(\frac{\lambda}{y} \int_x^yk^*\right)\, dy\right).
\]
\begin{align*}
(vi) \qquad\qquad\qquad \int_0^x\Phi_1\left(\frac{c \alpha_2(\lambda,x)}{\lambda}\int_y^xk^*\right)\, dy\leq \alpha_2(\lambda,x)< \infty
\end{align*}
and
\[
x \Phi_1\left(\frac{c \beta_2(\lambda,x)}{\lambda}\right)\leq \beta_2(\lambda,x)< \infty,
\]
where
\[
\alpha_2(\lambda,x)=\Psi_1 \circ \Psi_2^{-1}\left(\int_x^{\infty}\Psi_2\left(\frac{\lambda}{y}\right)\, dy\right)
\]
and
\[
\beta_2(\lambda,x)=\Psi_1 \circ \Psi_2^{-1}\left(\int_x^{\infty}\Psi_2 \left(\frac{\lambda}{y}\int_x^y k^*\right)\, dy\right)
\]
\begin{align*}
(vii)\qquad\qquad\qquad \int_0^x\Psi_2 \left(\frac{cy\alpha_3(\lambda,x)}{\lambda(I_2k^*)(y)}\right)\, dy\leq \alpha_3(\lambda,x)< \infty,
\end{align*}
where
\[
\alpha_3(\lambda, x)=\Phi_2 \circ \Phi_1^{-1}\left(\int_x^{\infty}\Phi_1 \left(\frac{\lambda}{y}\right)\, dy\right);
\]
\begin{align*}
(viii)\qquad\qquad\qquad \int_0^x \Phi_1 \left(\frac{cy\alpha_4(\lambda,x)}{\lambda(Ik^*)(y)}\right)\, dy\leq \alpha_4(\lambda, x)< \infty,
\end{align*}
where
\[
\alpha_4(\lambda,x)=\Psi_1 \circ \Psi_2^{-1}\left(\int_x^{\infty}\Psi_2\left(\frac{\lambda}{y}\right)\, dy\right).
\]
\end{theorem}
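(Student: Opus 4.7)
The strategy is a two-step reduction using the two main earlier results. Theorem \ref{Th1} reduces the convolution norm inequality (\ref{4}) to the four one-dimensional operator norm inequalities (i)--(iv), and Theorem \ref{TH2}, via the Remark following it, reduces each such norm inequality to an integral inequality whose explicit sufficient conditions are (\ref{7}). The plan is therefore to pair (i)$\leftrightarrow$(v), (ii)$\leftrightarrow$(vi), (iii)$\leftrightarrow$(vii), (iv)$\leftrightarrow$(viii) by matching the correct kernel $K$ of Theorem \ref{TH2}, the correct weights, and the correct Young pair to each case.

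First I would apply Theorem \ref{Th1}: it suffices to prove (i)--(iv). For (i) the operator $(Sg)(x)=x^{-1}\int_0^x g(s)\int_s^x k^*(r)\,dr\,ds$ is integral with kernel $L(x,y)=\chi_{(0,x)}(y)\,x^{-1}\int_y^x k^*$. Applying Theorem \ref{TH2} to $L$ with $w=t=u=v\equiv 1$ and Young pair $(\Phi_1,\Phi_2)$, the two conditions of (\ref{7}) become exactly (v), with the stated $\alpha_1(\lambda,x)$ and $\beta_1(\lambda,x)$. For (iv) the operator has kernel $L(x,y)=\chi_{(0,x)}(y)\,x^{-1}(Ik^*)(y)/y$; using $\hat{\rho}_i=\rho_{\Psi_i}$ one applies Theorem \ref{TH2} with the complementary Young pair $(\Psi_1,\Psi_2)$, and (\ref{7}) specializes to (viii).

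Cases (ii) and (iii) involve the down-dual norm $\tilde{\rho}_i$. I would exploit the identity $\rho_d(h)=\sup_{\rho_\Psi(k)\le 1}\int hk^*$ recalled before Theorem \ref{Th1}, so that the inequality $\tilde{\rho}_2(Sg)\leq C\tilde{\rho}_1(g)$ is equivalent, by duality, to $\rho_{\Psi_1}(S^{\!*}f)\leq C\rho_{\Psi_2}(f)$, where $S^{\!*}$ is the transpose of $S$. Theorem \ref{TH2} applied to $S^{\!*}$ with the Young pair $(\Psi_2,\Psi_1)$ produces (vi), since the complementary function $\Psi_2^{\!*}$ of $\Psi_2$ is $\Phi_1$, which accounts for the appearance of $\Phi_1$ on the left of the first inequality of (vi). The same duality applied to the operator in (iii), whose kernel is $L(x,y)=\chi_{(x,\infty)}(y)\,(I_2k^*)(x)/(xy)$, produces (vii); only one of the conditions of (\ref{7}) survives because the $y$-dependence of the kernel is through the single factor $1/y$, which is the degenerate convolution case highlighted at the end of Theorem \ref{TH2}.

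The main obstacle is verifying that each kernel $L$ meets the hypotheses of Theorem \ref{TH2}, namely the prescribed monotonicities and the growth condition (\ref{5}), $L(x,y)\leq L(x,z)+L(z,y)$ for $0<y<z<x$. For (i) this reduces to $x^{-1}(\int_y^z k^*+\int_z^x k^*)\leq z^{-1}\int_y^z k^*+x^{-1}\int_z^x k^*$, which is immediate from $x^{-1}\leq z^{-1}$. For the kernels in (iii) and (iv), which carry the factors $(I_2k^*)(x)/x$ and $(Ik^*)(y)/y$, the relevant subadditivity follows from the monotonicity of these quotients together with the factorized form of $L$. Once these structural checks are in place, the four implications (v)$\Rightarrow$(i), (vi)$\Rightarrow$(ii), (vii)$\Rightarrow$(iii), (viii)$\Rightarrow$(iv) are delivered by Theorem \ref{TH2} and the Remark, and (\ref{4}) follows from Theorem \ref{Th1}.
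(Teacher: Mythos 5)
Your architecture is the paper's: reduce via Theorem \ref{Th1} to (i)--(iv), then convert each to an instance of Theorem \ref{TH2}, with the pairing (i)$\leftrightarrow$(v), (ii)$\leftrightarrow$(vi), (iii)$\leftrightarrow$(vii), (iv)$\leftrightarrow$(viii). But the instantiation for (i)/(v) is wrong as written. You absorb the outer factor $x^{-1}$ into the kernel and take $w=t=u=v\equiv 1$. With $w\equiv 1$ the quantity $\alpha(\lambda,x)$ of Theorem \ref{TH2} is $\Phi_2\circ\Phi_1^{-1}\left(\int_x^{\infty}\Phi_1(\lambda)\,dy\right)=\infty$, not the stated $\alpha_1(\lambda,x)$, and the first condition in (\ref{7}) acquires an extra factor $1/x$ that is absent from (v); moreover the kernel $x^{-1}\int_y^x k^*$ need not be increasing in $x$, so the structural hypotheses of Theorem \ref{TH2} fail for it. The correct reading --- and what the paper does --- is to keep the kernel $K(x,y)=\int_y^x k^*$ (which is increasing in $x$, decreasing in $y$, and satisfies (\ref{5})) and put the $1/x$ into the outer weight, $w(y)=1/y$; then (\ref{7}) reduces to (v), and applying the same theorem to the same kernel with the complementary pair $(\Psi_2,\Psi_1)$ gives (vi) directly. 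Your detour through the transpose $S^{*}$ for (ii) is not needed and does not deliver (vi): the transposed kernel is supported on $\{y>x\}$, outside the monotonicity framework of Theorem \ref{TH2}, and would in any case yield conditions with $\int_x^{\infty}$ where (vi) has $\int_0^x$.

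For (iii)/(vii) and (iv)/(viii) the paper does not keep the factors $(I_2k^*)(s)/s$ and $(Ik^*)(s)/s$ inside the kernel; after dualizing (iii) it moves them to the right-hand side as weights, rewriting each inequality as a weighted estimate for the plain Hardy operator $I$, and then invokes Theorem 4.2 of \cite{BK}, which is precisely what justifies that only the single $\alpha$-type condition appears in (vii) and (viii). Your remark that the second condition drops out because the $y$-dependence is ``through the single factor $1/y$'' is not an argument; without the reduction to $I$ (or an explicit proof that the $\beta$-condition is implied by the $\alpha$-condition for these kernels) the absence of the second condition is unaccounted for. These are the points to repair; otherwise the route is the one the paper takes.
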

\begin{proof}
The conditions $(v)$ and $(vi)$ result from a direct application of Theorem \ref{TH2} to the operator with kernel
\[
K(x,y)=\int_y^{x}k^*,
\]
which clearly increases in $x$, decreases in $y$ and satisfies the growth condition (\ref{5}).

Writing the inequality $(iv)$ in the equivalent form
\[
\tilde{\rho}_2\left(\frac 1t (Ih)(t)\right)\leq C \tilde{\rho_1}(sh(s)/(Ik^*)(s)), \quad h \in M_+(\R_+),
\]
one sees the condition $(viii)$ comes out of Theorem 4.2 in \cite{BK}, which theorem accerts that for the Hardy operator $I$ only the first of the conditions in (\ref{7}) is required.

Finally, the inequality $(iii)$ is equivalent to the dual inequality
\[
\rho_1\left(\frac 1t \int_0^t (I_2k^*)(s)h(s)/s\, ds\right)\leq C \rho_2(h), \quad h \in M_+(\R_+),
\]
which can be written in the form
\[
\rho_1\left(\frac 1t (If)(t)\right)\leq C \rho_2(sf(s)/(I_2k^*)(s)), \quad f \in M_+(\R_+).
\]
The condition for this is $(vii)$.
\end{proof}

The integral inequality (\ref{6}) is equivalent to the norm inequality (\ref{8}) when $\Phi_1$ and $\Phi_2$ are power functions, say $\Phi_1(t)=t^q, \, \Phi_2(t)=t^p, \quad 1<p\leq q< \infty$.Moreover, in this case the conditions $(v)$ to $(viii)$ simplify. They become
\[
(v') \qquad\qquad\qquad \int_0^x\left(\int_y^x k^*\right)^{p'}\, dy\leq C x^{p'/q'}
\]
and
\[
\left(\int_x^{\infty}\left(\frac 1y \int_x^yk^*\right)^q\, dy\right)^{p'/q'}\leq C x^{-1};
\]
\[
(vi') \qquad\qquad\qquad \int_0^x\left(\int_y^x k^*\right)^q\, dy \leq C x^{q'/r}
\]
and
\[
\left(\int_x^{\infty}\left(\frac 1y \int_x^yk^*\right)^{p'\, dy}\right)^{q/r'}\leq C x^{-1};
\]
\[
(vii') \qquad\qquad\qquad \int-0^x\left(\frac{y}{(I_2k^*)(y)}\right)^{p'}\, dy\leq C x^{p/q'};
\]
\[
(viii') \qquad\qquad\qquad \int_0^x \left(\frac{y}{(Ik^*)(y)}\right)^q\, dy\leq Cx^{q/r}.
\]

It is shown in \cite{GK} that O'Neil condition inequality is sharp when $f$ and $g$ are radially decreasing on $\R^n$. Altogether then, we have
\begin{theorem}\label{TH5}
  Fix the indices $p$ and $q$, $1<p\leq q<\infty$, and suppose $k$ is radially decreasing on $\R^n$. Then,one has the inequality
  \[
  \left[\int_{\R^n}(T_kf)^q\right]^{1/q}\leq C \left[\int_\R f^p\right]^{1/p},
  \]
  with $C>0$ independent of $f \in M_+(\R^n)$, if and only if the conditions $(v') - (viii')$ hold.
\end{theorem}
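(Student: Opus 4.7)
The plan is to treat the two implications separately. For the sufficiency (``if''), I would apply Theorem~\ref{TH4} with the specialization $\Phi_1(t)=t^q$, $\Phi_2(t)=t^p$; the complementary $N$-functions are then, up to harmless constants, $\Psi_1(t)=t^{q'}$ and $\Psi_2(t)=t^{p'}$, and one has the elementary identities $\Phi_2\circ\Phi_1^{-1}(s)=s^{p/q}$ and $\Psi_1\circ\Psi_2^{-1}(s)=s^{q'/p'}$. Each of the quantities $\alpha_i(\lambda,x)$ and $\beta_i(\lambda,x)$ appearing in $(v)$--$(viii)$ then factors as a power of $\lambda$ times a function of $x$; the $\lambda$'s cancel on the two sides of the inequalities, and after collecting exponents the four conditions $(v)$--$(viii)$ reduce exactly to the stated $(v')$--$(viii')$. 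Theorem~\ref{TH4} then delivers $\rho_1(T_kf)\le C\rho_2(f)$, which, since the $L^p$ norms are rearrangement invariant, is the claimed estimate.

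For the necessity (``only if''), I would exploit the result of \cite{GK} that O'Neil's inequality \eqref{2.1} is sharp (reversible up to an absolute constant) when both convolution factors are radially decreasing. Since $k$ is radially decreasing by hypothesis, applying the reverse O'Neil inequality with $g=k$ yields, for every radially decreasing $f\in M_+(\R^n)$, a pointwise lower bound on $\int_0^t (T_kf)^*$ by the same sum of Hardy-type terms that appears in \eqref{3}. Combined with the assumed $\|T_kf\|_q\le C\|f\|_p$ restricted to radially decreasing $f$, this forces an $L^p$--$L^q$ bound on each of the four operators from the decomposition individually. Each such bound is a weighted Hardy or dual-Hardy inequality on $\R_+$, whose characterization in the range $1<p\le q<\infty$ is classical (Muckenhoupt--Bradley--Sawyer), and reading off those characterizations reproduces $(v')$--$(viii')$.

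The main obstacle I anticipate is decoupling the four terms in the necessity step. The reversed O'Neil inequality provides a lower bound on a \emph{sum} of four Hardy-type terms, so to isolate, say, condition $(v')$ one has to choose a radially decreasing $f$ for which the corresponding term dominates the other three. This is possible because the four operators live on disjoint scales: two are truncations of $f^*$ to $(0,t)$ or $(t,\infty)$, and the other two involve Hardy averages of $k^*$ rather than of $f^*$, so suitable scale-selective test functions (truncated powers, or indicators of annuli transferred to $\R^n$ as radial functions) separate them. Once the separation is achieved, the remaining step is the classical weighted-Hardy bookkeeping, which is routine and merely transcribes the Sawyer-type conditions in $(v)$--$(viii)$ into their power-function forms $(v')$--$(viii')$.
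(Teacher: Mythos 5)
Your proposal follows essentially the same route as the paper, which in fact offers no formal proof of Theorem~\ref{TH5} beyond the preceding discussion: sufficiency by specializing Theorem~\ref{TH4} to $\Phi_1(t)=t^q$, $\Phi_2(t)=t^p$ (where the integral and norm inequalities coincide and $(v)$--$(viii)$ collapse to $(v')$--$(viii')$), and necessity from the sharpness of O'Neil's inequality for radially decreasing factors as cited from \cite{GK}. If anything, you are more careful than the paper, which silently passes over the decoupling of the four Hardy-type terms that you correctly identify as the only nontrivial step in the necessity direction.
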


We remark that a convolution operator $T_k$ whose kernel $k(x)=k(|x|)$decreases in $|x|$ on $\R^n$ is known as a potential operator; see \cite{KS} and the reference therein. In the formulas $(v') - (viii')$, $k^*(t)=k\left(\dfrac{\Gamma(n/2+1)^{1/n}}{\sqrt{\pi}}t^{1/n}\right), \quad t \in \R_+$.

\section{General nonnegative integral operators on $M_+(\R_+)$}

As a first step in our study of (\ref{1}) for $T=T_K$ we focus on the related inequality
\begin{align}\label{9}
\rho_1(T_Kf^*)\leq C \rho_2(f^*), \quad f \in M_+(\R_+).
\end{align}

\begin{theorem}\label{Th6}
Fix $K \in M_+(\R_+^2)$ and let $\Phi_1$ and $\Phi_2$ be $N$-function, with $\Phi_2(2t)\approx \Phi_2(t), \quad t \gg 1$. Given weight functions $u_1, u_2 \in M_+(\R_+), \, \int_{\R_+u_2=\infty}$, one has (\ref{9}) for $\rho_i=\rho_{\Phi_i, u_i}, \, i=1,2$, if
\begin{align}\label{10}
\rho_{\Psi_2, u_2}(Sg/u_2)\leq C \rho_{\Psi_1,u_1}(g/u_1), \quad g \in M_+(\R+),
\end{align}
\[
u_2(x)=\int_0^xu_2(z)\, dz, \quad \textit{and} \quad \Psi_i(t)=\int_0^t\phi_i^{-1}, \quad i=1,2.
\]
\end{theorem}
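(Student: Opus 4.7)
The plan is to reduce the inequality \eqref{9} for nonincreasing functions to the unrestricted inequality \eqref{10} by duality, mirroring the strategy by which conditions (i)--(iv) of Theorem \ref{Th1} were extracted from the O'Neil convolution estimate. Since the image of $f \mapsto f^*$ is exactly the cone of nonincreasing nonnegative functions, \eqref{9} is equivalent to
\[
\rho_1(T_K h) \leq C \rho_2(h) \qquad \text{for every nonincreasing } h \in M_+(\R_+),
\]
and I would work with this reformulation throughout.

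First I would dualize the left-hand side via the standard Young--H\"older duality for weighted Orlicz gauge norms: up to the universal constant relating the Luxemburg gauge to the Orlicz gauge of $\Phi_1$,
\[
\rho_{\Phi_1, u_1}(T_K h) \approx \sup\Big\{ \int_{\R_+} (T_K h)(x)\, g(x)\, dx : \rho_{\Psi_1, u_1}(g/u_1) \leq 1 \Big\}.
\]
Fubini rewrites the inner integral as $\int_{\R_+} h(y) (Sg)(y)\, dy$, where $S$ is the formal adjoint of $T_K$, namely $(Sg)(y) := \int_{\R_+} K(x, y) g(x)\, dx$.

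Next I would invoke the weighted analogue of the down-dual identity $\rho_d(h) = \rho(\int_0^t h/t)$ from \cite{GK} recalled in the preamble to Theorem \ref{Th1}. For a nonincreasing $h$ and any $\psi \in M_+(\R_+)$, this should give an estimate of the form
\[
\int_{\R_+} h\, \psi \leq \rho_{\Phi_2, u_2}(h)\cdot \rho_{\Psi_2, u_2}\!\left( \frac{1}{U_2(t)}\int_0^t \psi \right),
\]
with $U_2(t) := \int_0^t u_2$; the hypothesis $\int_{\R_+} u_2 = \infty$ enters here to guarantee $U_2(\infty) = \infty$, which legitimizes the averaging representation. A change of variable identifying the averaging operator (against the measure $u_2\, dx$) with multiplication by $1/u_2$ then yields $\int h (Sg) \leq \rho_2(h)\, \rho_{\Psi_2, u_2}(Sg/u_2)$. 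The $\Delta_2$-hypothesis $\Phi_2(2t) \approx \Phi_2(t)$ for large $t$ is used at this step to absorb the multiplicative constants coming from the Orlicz/Luxemburg equivalence applied to $\rho_{\Phi_2, u_2}$.

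Combining the two displays, taking the supremum over admissible $g$, and invoking the hypothesis \eqref{10} produces $\rho_1(T_K h) \leq C \rho_2(h)$ for all nonincreasing $h$, which is \eqref{9}. The main technical obstacle is pinning down the precise weighted down-dual identity so that the averaged quantity $\tfrac{1}{U_2(t)}\int_0^t (Sg)$ really reduces, inside the norm $\rho_{\Psi_2, u_2}$, to $Sg/u_2$; the interplay between the weights $u_1, u_2$ and the complementary pairs $(\Phi_i, \Psi_i)$ must be tracked carefully to arrive at the exact quotients $Sg/u_2$ and $g/u_1$ appearing in \eqref{10}, and it is there that the assumption $\int_{\R_+} u_2 = \infty$ together with $\Phi_2 \in \Delta_2$ at infinity is indispensable.
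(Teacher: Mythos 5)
Your proposal is correct and follows essentially the same route as the paper: dualize $\rho_{\Phi_1,u_1}$ to its associate norm $\rho_{\Psi_1,u_1}(g/u_1)$, pass to the adjoint via $\int g\,T_Kf^*=\int f^*\,T_K'g$, and then apply the down-dual formula of \cite{GK} for the cone of nonincreasing functions, which converts $\sup\{\int f^*\psi:\rho_2(f^*)\le 1\}$ into $\rho_{\Psi_2,u_2}\bigl(\int_0^x\psi\,/\,U_2(x)\bigr)$. The ``technical obstacle'' you flag at the end dissolves once you note that the paper's $S$ is the \emph{integrated} adjoint, $(Sg)(x)=\int_0^x(T_K'g)(y)\,dy$ (as is clear from its use in the proof of Theorem \ref{Th7}), and that the denominator in \eqref{10} is $U_2(x)=\int_0^xu_2$ rather than $u_2$ itself, so your averaged quantity $U_2(t)^{-1}\int_0^t(T_K'g)$ is already exactly the expression $Sg/U_2$ appearing in the hypothesis.
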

\begin{proof}
  The identity
  \[
  \int_{\R_+}gT_Kf^*=\int_{\R_+}f^*T'_{K}g
  \]
  readily yields that (\ref{9}) holds if and only if
  \[
  (\rho_1')^d(T_K'g)\leq C \rho_1'(g),
  \]
  where
  \begin{align}\label{11}
  \rho'_1(g)=\rho_{\Psi_1, u_1}(g/u_1)
  \end{align}
  and
  \begin{align}\label{12}
  (\rho'_2)^d(h)=\rho_{\Psi_2, u_2}\left(\int_0^xh/u_2(x)\right), \quad h \in M_+(\R_+).
  \end{align}
  For (\ref{12}), see [GK, Theorem 6.2]; (\ref{11}) is straightforward. The proof is complete on ranking $h=T'_K(g)$.
\end{proof}

To replace $T_Kf^*$ in (\ref{9}) by $(T_Kf)^*$ we will require
\[
\rho\left(t^{-1}\int_0^tf^*\right)\leq C\rho_1(f^*), \quad f \in M_+(\R_+).
\]
Conditions sufficient for such an inequality to hold are given in
\begin{theorem}\label{Th7}
Let $\Phi$ be an $N$-function satisfying $\Phi(2t)\approx \Phi(t), \, t\gg 1$, and suppose $u$ is weight on $\R_+$ with $\int_{\R_+}u=\infty$. Then,
\begin{align}\label{13}
\rho_{\Phi,u}\left(t^{-1}\int_0^tf^*\right)\leq C \rho_{\Phi,u}(f^*), \quad f \in M_+(\R_+),
\end{align}
provided
\[
\int_0^x\Phi(c \alpha(\lambda,x)/\lambda)u(y)\, dy\leq \alpha(\lambda,x)< \infty,
\]
with $c>0$ independent of $\lambda, x \in \R_+$, where
\begin{align}\label{14}
&\alpha(\lambda,x)=\int_0^{\infty}\Psi(\lambda u(y)/u(y))\, dy\nonumber\\
&\textit{and}\\
&\int_0^x\Psi(c\beta(\lambda,x)/\lambda U(y))u(y)\, dy\leq \beta(\lambda,x)< \infty,\nonumber
\end{align}
with $c>0$ independent of $\lambda,x>0$, where
\[
\beta(\lambda,x)=\int_x^{\infty}\Phi(\lambda/y)u(y)\, dy.
\]
\end{theorem}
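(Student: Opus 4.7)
The plan is to treat inequality (\ref{13}) as a weighted norm inequality for the Hardy averaging operator $Hf(t)=t^{-1}\int_0^t f$ restricted to the cone of nonincreasing functions, and to read off the two sufficient conditions by applying Theorem~\ref{TH2} in two complementary directions. Since $Hf^*$ is itself nonincreasing (the average of a nonincreasing function over $(0,t)$ decreases in $t$), both sides of (\ref{13}) are $\rho_{\Phi,u}$-norms of nonincreasing functions and the whole problem lives inside this cone.

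First I would try to prove the stronger estimate that replaces $f^*$ by a general $f\in M_+(\R_+)$, namely $\rho_{\Phi,u}(t^{-1}\int_0^t f)\le C\rho_{\Phi,u}(f)$, under the $\beta$-condition of Theorem~\ref{Th7}. This is a direct weighted Hardy-type bound to which Theorem~\ref{TH2} applies with kernel $K(x,y)=\chi_{(0,x)}(y)$, outer weight $w(x)=1/x$ on the left (absorbing the $1/x$ factor of the average), $t=v=u$, and $\Phi_1=\Phi_2=\Phi$. By the last sentence of Theorem~\ref{TH2}, only the first of the conditions in (\ref{7}) is required for a pure Hardy kernel, and the substitution turns it into exactly the $\beta$-condition with $\beta(\lambda,x)=\int_x^\infty\Phi(\lambda/y)u(y)\,dy$. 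The Remark after Theorem~\ref{TH2} (which uses the doubling hypothesis $\Phi(2t)\approx\Phi(t)$) then upgrades the integral inequality to the gauge-norm inequality, and specializing to $f=f^*$ yields (\ref{13}).

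The $\alpha$-condition I would expect to come from a dual application. Using the down-dual identity $\rho_d(h)=\rho(t^{-1}\int_0^t h)$ that was invoked from \cite{GK} in Section~2, the norm estimate (\ref{13}) on the decreasing cone is equivalent to a weighted norm inequality for the adjoint Hardy-type operator $H^\ast_u g(y)=\int_y^\infty g(x)u(x)/x\,dx$ measured in the complementary Orlicz norm $\rho_{\Psi,u}$. The hypothesis $\int u=\infty$ is precisely what makes this dualization reversible: it forces $U(y)=\int_0^y u$ to map $\R_+$ bijectively onto itself, so the reduction to an integral inequality over $\R_+$ is well-posed. A second application of Theorem~\ref{TH2} to this adjoint Hardy operator, with $U$ appearing in the denominator in place of the Lebesgue variable, produces the $\alpha$-condition.

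The main obstacle I anticipate is the bookkeeping of weights in the dualization step: I must identify the correct triple $(w,t,v)$ in Theorem~\ref{TH2} so that its conclusion prints, verbatim, as the $\alpha$-condition of Theorem~\ref{Th7}, including the occurrence of $U(y)$ implicit in the definition of $\alpha(\lambda,x)$. A related subtlety is that the raw kernel $K(x,y)=\chi_{(0,x)}(y)/x$ is decreasing, not increasing, in $x$ for fixed $y$, so it does not itself satisfy the monotonicity hypothesis of Theorem~\ref{TH2}; one must move the factor $1/x$ outside as an outer weight $w(x)=1/x$ and check that the residual kernel $\chi_{(0,x)}(y)$ meets the growth condition (\ref{5}), which is immediate. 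Finally, the doubling assumption on $\Phi$ will need to be used again on the $\Psi$ side via the standard complementary-function relations to close the duality chain.
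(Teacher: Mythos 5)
There is a genuine gap, and it is structural. Your first step claims that the $\beta$-condition alone yields the \emph{unrestricted} inequality $\rho_{\Phi,u}\bigl(t^{-1}\int_0^t f\bigr)\le C\rho_{\Phi,u}(f)$ for all $f\in M_+(\R_+)$, after which one specializes to $f=f^*$. If that were so, the $\alpha$-condition would be redundant, which contradicts the way the theorem is stated; the whole point of restricting to the decreasing cone is that the pair $(\alpha,\beta)$ is weaker than what the unrestricted Hardy inequality demands. Concretely, if you run Theorem~\ref{TH2} as you propose (kernel $\chi_{(0,x)}(y)$, outer weight $w(x)=1/x$, $t=v=u$, inner weight $1$), the first condition of (\ref{7}) comes out as $\int_0^x\Psi\bigl(c\beta(\lambda,x)/(\lambda u(y))\bigr)u(y)\,dy\le\beta(\lambda,x)$, with $u(y)$ in the denominator — not the theorem's condition, which has $U(y)=\int_0^y u$ there. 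That $U(y)$ is the fingerprint of a dualization over the cone of nonincreasing functions and cannot be produced by applying Theorem~\ref{TH2} directly to $H$ on all of $M_+(\R_+)$.

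The paper's actual route is: specialize Theorem~\ref{Th6} (the down-dual reduction, resting on the identity $\rho_d(h)=\rho\bigl(\int_0^t h/t\bigr)$ from \cite{GK}) to the kernel $K(x,y)=\chi_{(0,x)}(y)/x$ with $\Psi_1=\Psi_2=\Psi$, $u_1=u_2=u$. The adjoint-side operator computes to $(Sg)(x)=\int_0^x g+x\int_x^\infty g(y)\,dy/y$, and (\ref{13}) reduces to $\rho_{\Psi,u}(Sg/U)\le C\rho_{\Psi,u}(g/u)$ for \emph{general} $g$. Splitting $S$ into its two summands gives the two inequalities (\ref{15}); the first is characterized by the modular inequality of [BK, Theorem 4.1] and yields the $\alpha$-condition, while the second is dualized once more to a weighted modular Hardy inequality $\int\Phi\bigl(c\,x^{-1}\int_0^x g\bigr)u\le\int\Phi(g/u)u$ and yields the $\beta$-condition. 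Your instinct that the down-dual identity and the hypothesis $\int_{\R_+}u=\infty$ are the engine is correct, but the dualization must come \emph{first}, producing a two-term operator each of whose pieces contributes one condition; neither condition by itself controls the operator on all of $M_+(\R_+)$.
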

\begin{proof}
In Theorem \ref{Th6}, take $K(x,y)=\chi_{0,x}(y)/x, \quad \Psi_1=\Psi_2=\Psi$ and $u_1=u_2=u$ to get
\[
(Sg)(x)=\int_0^x\int_y^{\infty}g(z)\frac{dz}{z}=\int_0^xg+x\int_x^{\infty}g(y)\frac{dy}{y},
\]
whence (\ref{10}) reduces to
\begin{align}\label{15}
&\rho_{\Psi,u}\left(\int_0^xg(U(x))\leq C \rho_{\Psi,u}(g/u)\right)\nonumber\\
&\textit{and}\\
&\rho_{\Psi,u}\left(x\int_0^{\infty}g(y)\frac{dy}{y}/U(x)\right)\leq C\rho_{\Psi,u}(g/u)\nonumber.
\end{align}
The first inequality in (\ref{15}) is a consequence of the modular inequality
\[
\int_{\R_+}\Psi\left(c\int_0^xg/u(x)\right)\leq \int_{\R}\Psi(g/u)u,
\]
which, according [BK, Theorem 4.1] hold, if and only if the first inequality in (\ref{14}) does.

Again, by duality, the second inequality in (\ref{15}) holds when the modular inequality
\[
\int_{\R_+}\Phi\left(c \frac 1x \int_0^xg\right)u\leq \int_{\R_+}\Phi(g/u)u
\]
does, which inequality holds if and only if one has the second condition in (\ref{14}).
\end{proof}

In Theorem \ref{Th8} below we show the boundedness of $T_Kf$ depends on that of $T_Kf^*$, where the kernel $L$ is the iterated rearrangement of $K$ considered in \cite{B}. Thus, for each $x \in \R_+$, we rearrange the function $k_x(y)=K(x,y)$ with respect to $y$ to get $(k^*_x)(s)=K^{*_2}(x,s)=k_s(x)$ and then rearrange the function of $x$ so obtained to arrive at $(K^{*_2})^{*_1}(t,s)=L(t,s)$. It is clear from its construction that $K(t,s)$ is nonincreasing in each of $s$ and $t$.

\begin{theorem}\label{Th8}
Consider $K \in M_+(\R_+^2)$ and set $L(t,s)=(K^{*_2})^{*_1}(t,s), \quad s,t \in \R_+$. suppose $\Phi_1$ and $\Phi_2$ are $N$-functions, with $\Phi_1(2t)\approx \Phi_1(t),\quad t\gg 1$, and let $u_1$ and $u_2$ be weight functions, with $\int_{\R_+}u_1=\infty$. Then, given the conditions (\ref{12}) for $\Psi=\Psi_1$ and $u=u_1$ one has
\[
\rho_1((T_kf)^*)\leq C \rho_2(f^*),
\]
provided
\[
\rho_1((T_Lf)^*)\leq C \rho_2(f^*), \quad f \in M_+(\R_+).
\]
\end{theorem}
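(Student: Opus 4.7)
The plan is to bound $\rho_1((T_K f)^*)$ by chaining three estimates: a Hardy--Littlewood--P\'olya (HLP) domination that replaces the original kernel $K$ and general $f$ by the iteratively-rearranged kernel $L$ acting on $f^*$, the Hardy-averaging inequality supplied by Theorem \ref{Th7}, and the hypothesized bound $\rho_1((T_L f)^*) \leq C \rho_2(f^*)$ applied at $f = f^*$. The key structural fact that makes everything fit is that $L(t,s)$ is nonincreasing in $t$ for each fixed $s$, so $t \mapsto (T_L f^*)(t)$ is itself nonincreasing and coincides with its own decreasing rearrangement.

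First, I would invoke the iterated-rearrangement argument of Bennett \cite{B} to obtain the HLP inequality
\[
\int_0^t (T_K f)^*(u)\, du \;\leq\; \int_0^t (T_L f^*)(u)\, du, \qquad t > 0.
\]
This comes in two moves: apply the one-variable Hardy--Littlewood inequality in $y$ to get $(T_K f)(x) \leq \int_0^\infty K^{*_2}(x,s) f^*(s)\, ds$ pointwise in $x$, then rearrange in $x$ to pass from $K^{*_2}(x,s)$ to $(K^{*_2})^{*_1}(t,s) = L(t,s)$ at the level of integrated rearrangements.

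Second, from this HLP inequality I would deduce the pointwise bound
\[
(T_K f)^*(t) \leq (T_K f)^{**}(t) \leq \frac{1}{t}\int_0^t (T_L f^*)^*(u)\, du,
\]
using that $(T_L f^*)^* = T_L f^*$, and then apply $\rho_1$ to both sides. Theorem \ref{Th7}, applicable because the hypothesis provides condition \eqref{14} for $(\Psi_1, u_1)$, then yields
\[
\rho_1\!\left(\frac{1}{t}\int_0^t (T_L f^*)^*(u)\, du\right) \leq C\, \rho_1((T_L f^*)^*).
\]
Finally, the hypothesized bound $\rho_1((T_L g)^*) \leq C \rho_2(g^*)$ applied at $g = f^*$ (using $(f^*)^* = f^*$) gives $\rho_1((T_L f^*)^*) \leq C \rho_2(f^*)$, and chaining everything produces $\rho_1((T_K f)^*) \leq C \rho_2(f^*)$.

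The main obstacle is justifying the HLP inequality in the first step; specifically, the $x$-variable passage from $K^{*_2}(x,s)$ to $L(t,s) = (K^{*_2})^{*_1}(t,s)$ inside an integral against the fixed nonnegative profile $f^*(s)$ is not a pointwise rearrangement statement and must be handled integrally, as in \cite{B}. Everything downstream is essentially bookkeeping: the assumption $\int_{\R_+} u_1 = \infty$ and the $\Delta_2$-type growth $\Phi_1(2t)\approx \Phi_1(t)$ at infinity are used solely to invoke Theorem \ref{Th7}, and the fact that $T_L f^*$ is automatically nonincreasing in $t$ avoids any need to relate $(T_L f^*)^*$ back to $T_L f^*$ by separate work.
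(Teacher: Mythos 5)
Your proposal is correct and follows essentially the same route as the paper's own proof: the Hardy--Littlewood inequality applied first in $y$ and then in $x$ (over sets $E$ with $|E|=t$) gives $(T_Kf)^{**}\leq (T_Lf^*)^{**}$, the averaging conditions of Theorem \ref{Th7} (the paper's reference to (\ref{12}) in the statement is a typo for (\ref{14})) remove the double star, and the hypothesized bound for $T_L$ applied at $f^*$ finishes, using that $T_Lf^*$ is nonincreasing so it equals its own rearrangement. No genuinely different ideas are involved.
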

\begin{proof}
We claim
\begin{align}\label{16}
(T_Kf)^{**}(t)\leq (T_Lf^*)^{**}(t), \quad t \in \R_+,
\end{align}
in which, say, $(T_Kf)^{**}(t)=t^{-1}\int_0^t(T_Kf)^*$.

Indeed, given $E \subset \R_+, \quad |E|=t$,
\begin{align*}
\int_ET_Kf&\leq \int_E\int_{\R_+}K^{*_2}(x,s)f^*(s)\,ds\\
&=\int_{\R_+}f^*(s)\, ds\int_{\R_+}\chi_E(x)K^{*_2}(x,s)\, dx\\
&\leq \int_{\R_+}f^*(s)\, ds \int_0^tL(u,s)\, du\\
&=\int_0^t(T_Lf^*)(u)\,du.
\end{align*}
Taking the supremum over all such $E,$ then dividing by $t$ wields (\ref{16}).

Next, the inequality
\[
\rho_{\Phi_1,u_1}((T_Kf)^{**})\leq C \rho_{\Phi_1, u_1}((T_Lf^*)^{**})
\]
is equivalent to
\[
\rho_{\Phi_1, u_1}((T_Kf)^*)\leq C \rho_{\Phi_1, u_1}(T_Lf^*),
\]
given (\ref{14}) for $\Phi=\Phi_1$ and $u=u_1$. For, in that case,
\begin{align*}
\rho_{\Phi_1,u_1}((T_Kf)^*)&\leq \rho_{\Phi_1,u_1}((T_kf)^{**})\\
&\leq \rho_{\Phi_1,u_1}((T_Kf^*)^{**})\\
&\leq C \rho_{\Phi_1,u}(T_Lf^*).
\end{align*}
The assertion of the theorem now follows.
\end{proof}

\begin{theorem}\label{Th9}
Let $K, L \Phi_1, \Phi_2, u_1$ and $u_2$ be as in theorem \ref{Th8}. Assume, in addition, that $\Phi_2(2t)\approx\Phi_2(t), \quad t\gg 1,$ $\int_{\R_+}u_2=\infty$ and that conditions (\ref{15}) hold for $\Phi=\Phi_1, \, u=u_1$. then,
\[
\rho_{\Phi_1,u_1}((T_Kf)^*)\leq C \rho_{\Phi_2, u_2}(f^*)
\]
provided
\begin{align}\label{17}
&\rho_{\Psi_2, u_2}(H_1f/u_2)\leq C \rho_{\Psi_1, u_1}(f/u_1)\nonumber\\
&\textit{and}\\
&\rho_{\Phi_1, i^2, u_1\circ i}(H_2g)\leq C \rho_{\Phi_2,i^2u_2\circ i}(g(i^2)), \quad f , \in M_+(\R_+)\nonumber,
\end{align}
where
\[
(H_1f)(x)=\int_0^xM_1(x,y)f(y)\, dy
\]
and
\[
(H_2g)(y)=\int_0^yM_2(y,x)g(x)\, dx,
\]
with $i(x)=x^{-1}$,
\[
M_1(x,y)=\int_0^xL(y,z)\, dz \quad \textit{and} \quad M_2(y,x)=\int_0^{x^{-1}}L(y^{-1},z)\, dz.
\]
\end{theorem}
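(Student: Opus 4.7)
The plan is to chain together three ingredients from earlier in the paper: Theorem \ref{Th8} reduces the problem to the rearranged kernel $L$; Theorem \ref{Th6} then dualizes the resulting inequality into a bound on an auxiliary Hardy-type operator $S$ built from $L$; and a splitting--plus--inversion argument decomposes $S$ into two pieces whose boundedness is controlled precisely by the two conditions in (\ref{17}).

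First, the hypotheses of Theorem \ref{Th8} are in force, since we are assuming (\ref{15}) for $\Phi_1,u_1$. That theorem reduces the desired estimate to $\rho_{\Phi_1,u_1}(T_L f^*)\le C\rho_{\Phi_2,u_2}(f^*)$, and because $L(\cdot,y)$ is nonincreasing, $T_L f^*$ is already nonincreasing, so Theorem \ref{Th6} applies to $L$ directly. Its criterion is the dual modular inequality $\rho_{\Psi_2,u_2}(Sg/U_2)\le C\,\rho_{\Psi_1,u_1}(g/u_1)$, where $U_2(x)=\int_0^x u_2$ and, after Fubini,
\[
(Sg)(x)=\int_0^x (T_L'g)(y)\,dy=\int_{\R_+} g(z)\,M_1(x,z)\,dz.
\]

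The next move is to split at $z=x$, writing $Sg=H_1 g+Rg$ with $(Rg)(x)=\int_x^{\infty} g(z)\,M_1(x,z)\,dz$. The bound on $H_1 g$ in the required norm is exactly the first inequality of (\ref{17}). For the tail $Rg$, the crucial manipulation is the substitution $x=\xi^{-1}$, $z=\zeta^{-1}$; the identity $M_1(\xi^{-1},\zeta^{-1})=M_2(\zeta,\xi)$ is immediate from the definitions of $M_1$ and $M_2$, and converts $Rg$ into an integral over $\zeta\in(0,\xi)$, i.e., a lower-triangular operator in the inverted variables. Rewriting the modular integrals defining $\rho_{\Psi_2,u_2}(Rg/U_2)$ and $\rho_{\Psi_1,u_1}(g/u_1)$ under this substitution produces exactly the Jacobian factor $\xi^{-2}$ and the composed weights $u_j\circ i$, so the resulting inequality coincides term by term with the second line of (\ref{17}). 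Adding the $H_1$ and $R$ estimates yields $\rho_{\Psi_2,u_2}(Sg/U_2)\le C\rho_{\Psi_1,u_1}(g/u_1)$, which by the reductions of the first paragraph completes the proof.

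The main obstacle I anticipate is precisely the bookkeeping in the change of variables for $Rg$: unraveling the compact notation ``$\Phi_1\cdot i^2$'', ``$u_1\circ i$'' and ``$g(i^2)$'' in (\ref{17}) and verifying that the Jacobian $\xi^{-2}$, combined with the substitutions in $u_j$ and in the argument of $g$, reproduces it on the nose (including the asymmetric argument order of $M_2$). The remaining steps are routine applications of Theorems \ref{Th8} and \ref{Th6}, Fubini's theorem, and a diagonal splitting, and require no further analytic input once the transformed-variable form of the tail has been correctly identified.
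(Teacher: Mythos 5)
Your overall strategy coincides with the paper's: reduce to $T_L$ via Theorem \ref{Th8}, invoke Theorem \ref{Th6} to convert the problem into the boundedness of the operator $S$ with kernel $M_1$, split at the diagonal, and recognize the two pieces as $H_1$ and, after the inversion $i(x)=x^{-1}$, $H_2$. The identity $M_1(\xi^{-1},\zeta^{-1})=M_2(\zeta,\xi)$ and the Jacobian bookkeeping you describe are exactly the computations the paper carries out, and your treatment of the near-diagonal piece $(H_1g)(x)=\int_0^x M_1(x,z)g(z)\,dz$ against the first line of (\ref{17}) is correct (indeed slightly cleaner than the paper's, since that condition is already phrased in the $\Psi$-norms, so no detour through the associate operator is needed for this piece).

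The gap is in the tail. You claim that merely rewriting the modular integrals in $\rho_{\Psi_2,u_2}(Rg/U_2)\le C\rho_{\Psi_1,u_1}(g/u_1)$ under $x=\xi^{-1}$, $z=\zeta^{-1}$ makes that inequality ``coincide term by term with the second line of (\ref{17}).'' It cannot: the second line of (\ref{17}) compares $\rho_{\Phi_1,\,i^2u_1\circ i}(H_2g)$ with $\rho_{\Phi_2,\,i^2u_2\circ i}(\cdot)$ --- the Young functions there are $\Phi_1,\Phi_2$ rather than $\Psi_2,\Psi_1$, and the weight built from $u_1$ sits on the operator side while the one built from $u_2$ sits on the data side, which is the opposite pairing from your $R$-inequality. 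A change of variables preserves the Young function and leaves each weight attached to the same side, so no substitution alone can effect this swap. What is missing is a duality step: one must pass from $R$ to its associate operator $(R'h)(z)=\int_0^z M_1(x,z)h(x)\,dx$ (the lower-triangular part of $S'$), which moves the estimate into the $\Phi$-norms with the weights on the correct sides, and only then invert to obtain $H_2$. This is precisely why the paper dualizes all of $S$ to $S'$ \emph{before} splitting, so that for the $\int_0^y$ piece the inversion is the only remaining manipulation. With that duality step inserted into your handling of $Rg$, your argument matches the paper's.
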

\begin{proof}
In view of theorem \ref{Th8} , we need only verify conditions (\ref{17}) imply
\begin{align}\label{18}
\rho_{\Psi_1,u_1}(T_Lf^*)\leq C \rho_{\Psi_2, u_2}(f^*), \quad f \in M_+(\R_+).
\end{align}
Now, according to Theorem \ref{Th6}, (\ref{18}) will hold if one has (\ref{10}) with $K=L$. Again, (\ref{10}) is equivalent to the dual inequality
\[
\rho_{\Phi_1, u_1}(S'g)\leq C \rho_{\Phi_2, u_2}(gU_2/u_2),
\]
where
\begin{align*}
(S'g)(y)&=\int_0^{\infty}\left[\int_0^xL(y,z)\, dz\right]g(x)\, dx\\
&=\left(\int_0^y+\int_y^{\infty}\right)\left[\int_0^x L(y,z)\, dz\right]g(x)\, dx\\
&=\int_y^{\infty}\left[\int_0^xL(y,z)\, dz\right]g(x)\, dx
\end{align*}
has associate operator
\[
\int_0^x\left[\int_0^xL(y,z)\, dz\right]f(y)\, dy=(H_1f)(x)
\]
and this operator is to satisfy
\[
\rho_{\Psi_2, u_2}(H_1, U_2)\leq C \rho_{\Psi_1, u_1}(f/u_1).
\]

Again, if there is to exist $C>0$ so that
\begin{align}\label{19}
\rho_{\Phi_1, u_1}\left(\int_0^y\left[\int_0^x L(y,z)\, dz\right]g(x)\right)\leq C \rho_{\Phi_2, u_2}(gU_2/u_2),
\end{align}
then, for such $C$,
\[
\int_0^{\infty}\Phi_1\left(\int_0^y\left[\int_0^xL(y,z)\, dz\right]g(x\, dx)/C\rho_{\Phi_2, u_2}(gU_2/u_2)\right)u_1(y)\, dy\leq 1
\]
for all $g \in M_+(\R_+), \, g \neq 0 a.e.$ That is, on making the changes of variables $y \to y^{-1}$ then $x \to x^{-1}$, one will have
\[
\int_0^{\infty}\Phi_1\left(\int_0^y\left[\int_0^{x^{-1}}L(y^{-1},z)\, dz\right]\frac{g(x-1)}{x^2}\frac{dx}{x}/C\rho_{\Phi_1, u_2}(gU_2/u_2)\right)u_1(y^{-1})\frac{dy}{y}\leq 1.
\]
\[
\int_0^{\infty}\Phi_2(gU_2/\lambda u_2)u_2=\int_0^{\infty}\Phi_2(g \circ i u_2 \circ i/\lambda u_2 \circ i)i^2u_2\circ i,
\]
so
\[
\rho_{\Phi_2, u_2}(gU_2/u_2)=\rho_{\Phi_2, i^2u_2\circ i}(g \circ i u_2\circ i/u_2 \circ i),
\]
whence (\ref{19}) amounts to
\[
\rho_{\Phi_1, i^2u_1\circ i}(H_2(i^2g\circ i u_2 \circ i/u_2\circ i))\leq C \rho_{\Phi_2, i^2u_2}(g\circ u_2 \circ i/u_2 \circ i)
\]
or
\[
\rho_{\Phi_1, i^2u_1\circ i}(H_2g)\leq C \rho_{\Phi_2, i^2u_2\circ i}(g/i^2),
\]
since $i^2g\circ i u_2 \circ i/u_2 \circ i$ is arbitrary.
\end{proof}

We have to this point shown that the inequality (\ref{1}) holds for $\rho_i=\rho_{\Phi_i, u_i, \quad i=1,2}$, whenever the inequalities (\ref{17}) holds for $H_1$ and $H_2$. In Theorem \ref{Th10} below we give four conditions which, together with (\ref{14}) for $\Phi_1$ and $\Phi_2$, guarantee (\ref{17}).

The kernel $M_1(x,y)$ of the operator $H_1$ is increasing in $x$ and decreasing in $y$. Similarly, the kernel $M_2(y,x)$ of $H_2$ is increasing in $y$ and decreasing in $x$. The operators $H_1$ and $H_2$ will be so-called generalized Hardy operator (GHOs) if their kernels satisfy the growth conditions
\begin{align}\label{20}
&M_1(x,y)\leq M_1(x,z)+M_1(z,y), \quad y<z<x,\nonumber\\
&\textit{and}\\
&M_2(y,x)\leq M_2(y,z)+M_2(z,x), \quad x<z<y.\nonumber
\end{align}
Neither of the conditions in (\ref{20}) are guarantees to hold. They have to be assumed in theorem \ref{Th10} below so that we may apply Theorem 1.7 in \cite{BK} concerning GHOs. Theorem \ref{Th11} in the next section gives a class of kernels for which (\ref{20}) is satisfied.
\begin{theorem}\label{Th10}
Let $K, L, \Phi_1, \Phi_2, u_1, u_2, M_1, M_2, H_1$ and $H_2$ be as in Theorem \ref{Th9}. Assume, in addition, that $\Phi_1\circ \Phi_2^{-1}$ is convex and that $M_1$ and $M_2$ satisfy the growth conditions (\ref{20}). Then, one has
\begin{align}\label{21}
\int_{\R_+}\Phi_1(T_Kf)^*u_1\leq C \int_{\R_+}\Phi_2(f^*)u_2,
\end{align}
provided
\begin{align}\label{22}
&\int_0^x\Phi_1\left(c\frac{\alpha_1(\lambda, x)M_1(x,y)}{\lambda u_1(y)}\right)u_1(y)\, dy\leq \alpha_1(\lambda,x)<\infty,\nonumber\\
&\int_0^x\Phi_1\left(c\frac{\beta_1(\lambda,x)}{\lambda u_1(y)}\right)u_1(y)\, dy\leq \beta_1(\lambda,x)<\infty,\nonumber\\
&\int_0^y\Psi_2\left(c \frac{\alpha_2(\lambda,y)M_2(y,x)}{u_2(x^{-1})}\right)x^{-2}u_2(x^{-1})\, dx\leq \alpha_2(\lambda,y)<\infty\\
&\textit{and}\nonumber\\
&\int_0^4\Psi_2\left(c \frac{\beta_2(\lambda,y)}{u_2(x^{-1})}\right)x^{-2}u_2(x^{-1})\, dx\leq \beta_2(\lambda, y).
\end{align}
Here,
\begin{align*}
&\alpha_1(\lambda,x)=\Psi_1 \circ \Psi_2^{-1}\left(\int_x^{\infty}\Psi_2\left(\frac{\lambda}{u_2(y)}\right)u_2(y)\, dy\right),\\
&\beta_1(\lambda,x)=\Psi_1\circ\Psi_2^{-1}\left(\int_x^{\infty}\Psi_2\left(\frac{\lambda M_1(x,y)}{u_2(y)}\right)u_2(y)\, dy\right)\\
&\alpha_2(x,y)=\Phi_2\circ\Phi_1^{-1}\left(\int_y^{\infty}\Phi_1(\lambda)x^{-2}u_1(x^{-1})\, dx\right)\\
&\textit{and}\\
&\beta_2(\lambda, y)=\Phi_2\circ \Phi_1^{-1}\left(\int_y^{\infty}\Phi_1(\lambda M_2(x,y))x^{-2}u_1(x^{-1})\, dx\right).
\end{align*}
\end{theorem}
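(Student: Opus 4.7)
The plan is to invoke Theorem \ref{Th9} to reduce (\ref{21}) to the pair of Orlicz gauge norm bounds (\ref{17}) for the generalized Hardy operators $H_1$ and $H_2$; the running hypotheses needed to apply Theorem \ref{Th9}, including condition (\ref{14}) for $\Phi_1, u_1$, are already in force. After that reduction, each of the two inequalities in (\ref{17}) is a weighted Orlicz bound for an operator whose kernel is monotone in the appropriate sense and, by the extra hypothesis here, satisfies the growth condition (\ref{20}) — which is precisely the form of (\ref{5}) needed to invoke Theorem \ref{TH2}.

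For the first inequality in (\ref{17}), I would apply Theorem \ref{TH2} with kernel $K = M_1$, letting the $N$-functions $\Psi_2, \Psi_1$ play the roles of Theorem \ref{TH2}'s $\Phi_1, \Phi_2$, and choosing the weights $w, t, u, v$ so that the two sides of the resulting integral inequality reproduce the modular forms of the gauge norms in (\ref{17}). The convexity hypothesis required by Theorem \ref{TH2} — namely $\Psi_2 \circ \Psi_1^{-1}$ convex after the substitution — follows from the assumed convexity of $\Phi_1 \circ \Phi_2^{-1}$ via the standard duality relation $A^{-1}(t)\,\widetilde{A}^{-1}(t) \approx t$ between an $N$-function $A$ and its complement $\widetilde{A}$. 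The conditions (\ref{7}) of Theorem \ref{TH2} then read off as the $\alpha_1, \beta_1$ lines of (\ref{22}).

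For the second inequality in (\ref{17}), the same strategy applies with kernel $K = M_2$, which is increasing in its first argument $y$ and decreasing in its second $x$ — exactly what Theorem \ref{TH2} requires — and whose growth condition is the second line of (\ref{20}). The weights here carry the inversion factor $i^2 u_j \circ i$ because the change of variables $x \mapsto x^{-1}$ was already absorbed into (\ref{17}) during the proof of Theorem \ref{Th9}. A second application of Theorem \ref{TH2} with the transformed weights $t(y) = y^{-2} u_1(y^{-1})$ and $v(x) = x^{-2} u_2(x^{-1})$ produces the $\alpha_2, \beta_2$ lines of (\ref{22}). Passage from the integral inequality back to the modular inequality (\ref{21}) is the standard rescaling trick recorded in the Remark following Theorem \ref{TH2}.

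The main obstacle is not conceptual but bookkeeping: one must track carefully which of the four $N$-functions $\Phi_1, \Phi_2, \Psi_1, \Psi_2$ plays each of Theorem \ref{TH2}'s $\Phi_1, \Phi_2$ in the two applications, how the complementary functions reappear in the $\alpha$'s and $\beta$'s of (\ref{22}), and how the weights $u_1, u_2$ and their composites with the inversion $i$ distribute through the integrands. A secondary point is that (\ref{20}) is an explicit assumption — it does not hold for arbitrary $K$ — which is why Theorem \ref{Th11} is later needed to identify a class of kernels where it is genuinely satisfied.
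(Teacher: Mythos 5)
Your proposal follows essentially the same route as the paper: reduce via Theorem \ref{Th9} to the two gauge-norm inequalities (\ref{17}), then apply Theorem \ref{TH2} to $H_1$ (with $\Psi_2,\Psi_1$ in the roles of that theorem's $\Phi_1,\Phi_2$) and to $H_2$ (with the inverted weights $y^{-2}u_1(y^{-1})$ and $x^{-2}u_2(x^{-1})$), reading off the four conditions of (\ref{22}) and passing between the integral and norm forms by the rescaling trick of the Remark --- the paper writes out only the $H_2$ case and declares the $H_1$ case ``similar.'' Your explicit observation that $\Psi_2\circ\Psi_1^{-1}$ inherits convexity from $\Phi_1\circ\Phi_2^{-1}$ by duality is a detail the paper leaves tacit, but it does not alter the argument.
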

\begin{proof}
We prove the result involving $H_2$; the proof for $H_1$ is similar. Now, the norm inequality for $H_2$ for holds if one has the integral inequality
\begin{align}\label{23}
\Phi_1^{-1}\left(\int_{\R_+}\Phi_1\left(\frac{H_2g}{C}\right)y^{-2}u_1(y^{-1})\,dy\right)\leq \Phi_2^{-1}\left(\int_{\R_+}\Phi_2(y^2g(y))y^{-2}u_2(y^{-1})\, dy\right).
\end{align}
Indeed, in the latter replace $g(y)$ y $g(y)/\rho_{\Phi_2,y^{-2}u_2(y^{-1})}=g(y)/\lambda$, to get
\begin{align*}
&\int_{\R_+}\Phi_1\left(\frac{H_2g}{C \lambda}\right)y^{-2}u_1(y^{-1})\, dy \leq \Phi_1 \circ \Phi_2^{-1}\left(\int_{\R_+}\Phi_2\left(\frac{y^2g(y)}{\lambda}\right)y^{-2}u_2(y^{-1})\, dy\right)=\Phi_1\circ \Phi_2^{-1}(1)=1,
\end{align*}
where we have assumed, without loss of generality, that $\Phi_1(1)=\Phi_2(1)=1$. Hence,
\[
\rho_{\Phi_1, y^{-2}u_1(y^{-1})}(H_2g)\leq C \lambda=C \rho_{\Phi_2, y^{-2}u_2(y^{-1})}(g/i^2).
\]
But, (\ref{23}) is valid if and only if the third and fourth conditions in (\ref{23}) hold.
\end{proof}

\section{Examples}
\begin{theorem}\label{Th11}
Let $k$ be nonnegative, nonincreasing function on $\R_+$. Then, the growth conditions (\ref{20}) are satisfied for $K(x,y)=k(x+y), \, x,y \in \R_+$.
\end{theorem}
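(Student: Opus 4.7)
The plan is to reduce the claim to a direct computation once the iterated rearrangement is identified. Since $k$ is nonincreasing on $\R_+$, the map $y\mapsto K(x,y)=k(x+y)$ is nonincreasing on $\R_+$ for every fixed $x$, so its nonincreasing rearrangement in $y$ is itself: $K^{*_2}(x,s)=k(x+s)$. The resulting function is also nonincreasing in $x$ for fixed $s$, so rearranging again in $x$ returns the same function, giving $L(t,s)=k(t+s)=K(t,s)$. Thus the growth conditions in \eqref{20} have to be checked for the explicit kernels
\[
M_1(x,y)=\int_0^x k(y+z)\,dz=\int_y^{y+x}k(u)\,du,
\qquad
M_2(y,x)=\int_0^{x^{-1}} k(y^{-1}+z)\,dz=\int_{y^{-1}}^{y^{-1}+x^{-1}}k(u)\,du,
\]
after the change of variable $u=y+z$ (respectively $u=y^{-1}+z$).

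For the first inequality in \eqref{20}, fix $0<y<z<x$ and split
\[
M_1(x,y)=\int_y^{y+z}k(u)\,du+\int_{y+z}^{y+x}k(u)\,du = M_1(z,y)+\int_{y+z}^{y+x}k(u)\,du.
\]
Since $y>0$ and $y<z$, the interval $[y+z,y+x]$ is contained in $[z,z+x]$, so nonnegativity of $k$ gives
\[
\int_{y+z}^{y+x}k(u)\,du\leq \int_z^{z+x}k(u)\,du = M_1(x,z),
\]
and the claimed inequality follows.

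For the second inequality in \eqref{20}, fix $0<x<z<y$, so $0<y^{-1}<z^{-1}<x^{-1}$, and split
\[
M_2(y,x)=\int_{y^{-1}}^{y^{-1}+z^{-1}}k(u)\,du+\int_{y^{-1}+z^{-1}}^{y^{-1}+x^{-1}}k(u)\,du = M_2(y,z)+\int_{y^{-1}+z^{-1}}^{y^{-1}+x^{-1}}k(u)\,du.
\]
Since $y^{-1}>0$ and $y^{-1}<z^{-1}$, the interval $[y^{-1}+z^{-1},y^{-1}+x^{-1}]$ is contained in $[z^{-1},z^{-1}+x^{-1}]$, and again $k\geq 0$ yields the bound by $M_2(z,x)$. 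I do not expect any serious obstacle here: the only subtle step is identifying $L=K$, after which both inequalities reduce to the trivial set inclusion of translates of sub-intervals — monotonicity of $k$ is not even needed beyond guaranteeing $L=K$; nonnegativity suffices for the final step.
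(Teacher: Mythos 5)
Your proof is correct and follows essentially the same route as the paper's: identify $L=K$ from the monotonicity of $k$, split $M_1$ (resp.\ $M_2$) at $z$ (resp.\ $z^{-1}$), and bound the tail piece by $M_1(x,z)$ (resp.\ $M_2(z,x)$). Your reformulation via the interval containments $[y+z,\,y+x]\subset[z,\,z+x]$ is a clean way to justify the one inequality the paper leaves implicit (and your observation that only nonnegativity of $k$ is needed there is accurate), but it is not a different argument.
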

\begin{proof}
We observe that $K(x,y)=L(x,y)$ since $K$ decreases in each of $x$ and $y$. So,
\[
M_1(x,y)=\int_0^xk(y+s)\, ds \qquad \textit{and} \qquad M_2(x,y)=\int_0^{x-1}k(y^{-1}+s)\, ds.
\]

Now, given $y<z<x$,
\begin{align*}
M_1(x,y)&=\int_0^xk(y+s)\, ds=\int_0^zk(y+s)\, ds+\int_z^xk(y+s)\, ds\\
&=M_1(y,z)+\int_0^{x-z}k(y+z+s)\, ds\\
&\leq M_1(y,z)+\int_0^xk(z+s)\, ds\\
&=M_1(y,z)+M_1(x,z).
\end{align*}

Again, given $x<z<y,$
\begin{align*}
M_2(x,y)&=\int_0^{x^{-1}}k(y^{-1}+s)\, ds=\int_0^{z^{-1}}k(y^{-1}+s)\, ds+\int_{z^{-1}}^{x^{-1}}k(y^{-1+s})\, ds\\
&=M_2(z,y)+\int_0^{x^{-1}-z^{-1}}k(y^{-1}+z^{-1}+s)\, ds\\
&\leq M_2(z,y)+\int_0^{x^{-1}}k(z^{-1}+s)\, ds\\
&=M_2(z,y)+M_2(x,z).
\end{align*}
\end{proof}

\begin{theorem}\label{Th12}
Fix the indices $p$ and $q, \, 1<p\leq q< \infty$, and suppose $K(x,y)=k(x+y), \, x, y \in \R_+,$ where $k$ is nonnegative and nonincreasing on $\R_+$.
\begin{align}\label{24}
\left(\int_{\R_+}(T_Kf)^q\right)^{1/q}\leq C \left(\int_{\R_+}f^p\right)^{1/p},
\end{align}
with $C>0$ independent of $f \in M_+(\R_+)$, if and only if
\begin{align}\label{25}
&c\int_0^xM_1(x,y)^qu_1(y)^{1-q}\, dy\leq \lambda_1(x)^{1-q}\nonumber\\
&c\int_0^xu_1(y)^{1-q}\, dy\leq \beta_1(\lambda)^{1-q}\nonumber\\
&c\int_0^yM_2(y,x)^{p'}x^{-2}u_2(\lambda^{-1})^{1-p'}\leq \alpha_2(x)^{1-p'}\\
&\textit{and}\nonumber\\
&c\int_0^yx^{-2}u_2(x^{-1})^{1-p'}\leq \beta_2(x)^{1-p'}.
\end{align}
Here,
\begin{align*}
&\alpha_1(x)=\left(\int_x^{\infty}U_2(y)^{-p'}u_2(y)\, dy\right)^{q'/p'},\\
&\beta_1(x)=\left(\int_x^{\infty}\left[\frac{M_1(x,y)}{U_2(y)}\right]^{p'}u_2(y)\, dy\right)^{q'/p'},\\
&\alpha_2(y)=\left(\int_y^{\infty}x^{-2}u_1(x^{-1})\, dx\right)^{p/q}\\
&\textit{and}\\
&\beta_2(y)=\left(\int_y^{\infty}M_2(x,y)^qx^{-2}u_1(x^{-1})\, dx\right)^{p/q}.
\end{align*}
\end{theorem}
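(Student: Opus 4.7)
The plan is to derive Theorem \ref{Th12} as the $L^{p}$--$L^{q}$ specialization of Theorem \ref{Th10}, upgraded to an \emph{iff} in this power range by the equivalence between integral and norm inequalities for power $N$-functions noted right after Theorem \ref{TH2}. Theorem \ref{Th11} (already proved above) furnishes the growth conditions (\ref{20}) for the kernels $M_{1}$ and $M_{2}$ attached to $L(x,y)=k(x+y)$, and its proof also records that $K=L$ for this symmetric decreasing kernel. Consequently Theorems \ref{Th8}, \ref{Th9} and \ref{Th10} all apply with $\Phi_{1}(t)=t^{q}$, $\Phi_{2}(t)=t^{p}$: the composition $\Phi_{1}\circ\Phi_{2}^{-1}(t)=t^{q/p}$ is convex because $p\leq q$, the $\Delta_{2}$ conditions are automatic, and the hypothesis $\int u_{i}=\infty$ is interpreted on the relevant weights $u_{1},u_{2}$.

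For the sufficiency direction I would substitute $\Phi_{1}(t)=t^{q}$ and $\Phi_{2}(t)=t^{p}$ into the four conditions (\ref{22}) and simplify. Using the relations $\Psi_{j}(t)\sim t^{p'_{j}}$, $\Psi_{1}\circ\Psi_{2}^{-1}(s)\sim s^{q'/p'}$, and $\Phi_{2}\circ\Phi_{1}^{-1}(s)\sim s^{p/q}$, the quantities $\alpha_{i}(\lambda,\cdot)$ and $\beta_{i}(\lambda,\cdot)$ become explicit powers of $\lambda$ times the $x$- and $y$-integrals displayed in (\ref{25}). Invoking the identity $q'(1-q)=-q$ (and its counterpart $p(1-p')=-p'$), the $\lambda$-powers on both sides of each inequality cancel, leaving precisely (\ref{25}). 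The auxiliary hypothesis (\ref{14}) for $\Phi=\Phi_{1}$ reduces to the classical boundedness of the Hardy averaging operator on $L^{q}$, which holds for $q>1$.

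For necessity, the remark following Theorem \ref{TH2} asserts that in the power range the norm inequality (\ref{8}) is \emph{equivalent} to the integral inequality (\ref{6}), so Theorem \ref{TH2} itself becomes an iff statement. Combined with the equality $K=L$, the implication in Theorem \ref{Th8} becomes a biconditional, and each of the duality and change-of-variable reductions used in the proof of Theorem \ref{Th9} is likewise reversible in the $L^{p}$--$L^{q}$ setting (each step is an isometric identification up to uniform constants). Chaining these biconditionals, (\ref{24}) is equivalent to the power-case form of (\ref{17}) for $H_{1}$ and $H_{2}$, which by the iff version of Theorem \ref{TH2} is in turn equivalent to (\ref{25}).

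The main obstacle I expect is the careful audit of reversibility in Theorem \ref{Th9}: the change of variables $i(x)=x^{-1}$ that produces the operator $H_{2}$ must be tracked to confirm that an $L^{p}$-estimate on $\R_{+}$ is \emph{equivalent}, rather than merely implied by, its pull-back to an $L^{p}$-estimate against the weight $x^{-2}$, which is exactly the source of the $x^{-2}u_{i}(x^{-1})$ factors appearing in (\ref{25}). Once this bookkeeping is in place, both directions of the claimed equivalence between (\ref{24}) and (\ref{25}) follow mechanically from Theorem \ref{Th10} and the power-case iff in Theorem \ref{TH2}.
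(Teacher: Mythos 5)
Your sufficiency argument is essentially the paper's proof: the paper likewise obtains Theorem \ref{Th12} by feeding $\Phi_1(t)=t^q$, $\Phi_2(t)=t^p$ into Theorem \ref{Th10}, invoking Theorem \ref{Th11} to secure the growth conditions (\ref{20}) for $M_1$ and $M_2$, and observing that the parameter $\lambda$ cancels so that (\ref{22}) collapses to (\ref{25}). The difference is in the ``only if'' direction. The paper's proof stops after the sufficiency step --- it asserts the equivalence but supplies no argument for necessity --- whereas you at least sketch one, via the observation that in the pure power case the integral inequality (\ref{6}) coincides with the norm inequality (\ref{8}), so that Theorem \ref{TH2} becomes a genuine characterization, together with a claim that the reductions of Theorems \ref{Th8} and \ref{Th9} reverse.

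That necessity sketch is the right idea but is not yet a proof, and the gap you flag yourself is real. Two points in particular must be made explicit. First, the step in Theorem \ref{Th8} rests on the one-directional estimate $(T_Kf)^{**}\leq (T_Lf^*)^{**}$; to reverse it you should argue directly that the unrestricted inequality (\ref{24}) implies $\rho_1(T_Lf^*)\leq C\rho_2(f^*)$ by testing (\ref{24}) on nonincreasing $f$ and using that, for $K=L$ nonincreasing in each variable, $T_Kf^*$ is itself nonincreasing and hence equal to its own rearrangement --- ``$K=L$ makes Theorem \ref{Th8} a biconditional'' is an assertion, not an argument. Second, in Theorem \ref{Th9} the operator $S'$ is split into the two pieces generating $H_1$ and $H_2$; necessity of each of the two conditions in (\ref{17}) from the boundedness of the sum needs the positivity of the kernels (so that each summand is dominated by $S'g$ for $g\geq 0$), which you should state. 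With those two observations supplied, your chain of equivalences closes and you would in fact have proved more than the paper's own proof does.
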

\begin{proof}
The $N$-functions $\Phi_1(t)=t^q$ and $\Phi_2(t)=t^p,$ as well as the weights $u_1$ and $u_2$ satisfy the conditions required in theorem \ref{Th10}. According to Theorem \ref{Th11}, so do the kernels $M_1$ and $M_2$ of $H_1$ and $H_2$, respectively. We conclude, then, that (\ref{21}) holds for $T_K$ given (\ref{22}), which in our case are the inequality (\ref{24}) and the conditions (\ref{25}). We observe that $\lambda$ cancels out in the latter conditions and we are left with $\alpha_1)1,x(=\alpha_1(x),$ etc.
\end{proof}

Consider a kernel of the form $K(x,y)=k(\sqrt{x^2+y^2})$, where $k(t)$ is nonincreasing in $t$ on $\R_+$ and $\int_0^{\infty}K(x,y)\, dy$ for all $a,x>0$. In particular, $K(x,y)$ is nonincreasing on $\R_+$ in each of $x$ and $y$. Again, $M_1(x,y)=\int_0^xk(\sqrt{x^2+z^2})\, dz$ and $M_2(x,y)=\int_0^yk(\sqrt{x^2+z^2})\, dz$ satisfy (\ref{20}), so that (\ref{24}) holds for $T_K$, given (\ref{25}).

In particular, the above is true for $K(x,y)=\left(x^2+y^2\right)^{-3/4}$. However, this kernel does not satisfy the classic Kantorovi\v{c} condition usually involved to prove (\ref{24}) for $T_K$. Indeed,
\[
 2^{-3/4}{(x+y)^{-3/2}}\leq K(x,y)\leq  2^{3/4} (x+y)^{-3/2},
\]
whence, for $p>1$,
\[
\left[\int_0^{\infty}K(x,y)^{p'}\, dy\right]^{1/p'}\approx x^{-1/p-1/2},
\]
and, therefore,
\[
\left[\int_0^{\infty}\left[\int_0^{\infty}K(x,y)^{p'}\, dy\right]^{q/p'}\, dx\right]^{1/q}=\left[\int_0^{\infty}\frac{dx}{x^{q/p+q/2}}\right]^{1/q}=\infty.
\]
The kernel $K(x,y)$ is not homogeneous of degree $-1$, so Theorem in \cite{HLP} does not apply to it.

We observe that in Theorem 3.3, the weights $v_i\equiv 1, \quad i=1,2$, then the inequality $\rho_1((T_Kp)^q)\leq C \rho_2(f^*)$ is the same as $\rho_1(T_Kf)\leq C \rho_2(f)$.

Finally, R.~O'Neil in \cite{O} proved that, for $K \in M_+(\R_+^2)$, one has, for each $f \in M_+(\R_+)$,
\[
\frac 1x \int_0^x(T_Kf^*)(y)\, dy\leq \int_0^{\infty}K^*(xy)f^*(y)\, dy.
\]
Given $K(x,y)=k(\sqrt{x^2+y^2})$ as above, $K^*(t)=k(t^{1/2})$, so the right side of the O'Neil inequality is
\[
\int_0^{\infty}k(\sqrt{xy})f^*(y)\, dy.
\]
On the other hand,
\[
(T_Kf^*)(x)=\int_0^{\infty}k(\sqrt{x^2+y^2})f^*(y)\, dy.
\]
Thus, if $\Phi_1(2t)\approx\Phi_1(t)$, for $t\gg 1$, the O'Neil inequality yields
\[
\int_0^{\infty}\Phi_1((T_Kf^*)(x))u_1(x)\, dx\leq \int_0^{\infty}\Phi_1\left(\int_0^{\infty}k(\sqrt{xy}f^*(y))\, dy\right)\, dx.
\]
Observing that $k(\sqrt{x^2+y^2})=k\left(\sqrt{\frac{x^2+y^2}{xy}}\sqrt{xy}\right)=k\left(\sqrt{\frac yx+ \frac xy}\sqrt{xy}\right)$, thus our bound is tighter.


\end{document}